\definecolor{darkblue}{rgb}{0.0,0.0,0.3}
\newcommand{\makeheading}[2][]%
        {\hspace*{-\marginparsep minus \marginparwidth}%
         \begin{minipage}[t]{\textwidth+\marginparwidth+\marginparsep}%
             {\large \bfseries #2 \hfill #1}\\[-0.15\baselineskip]%
                 \rule{\columnwidth}{1pt}%
         \end{minipage}}
\newtheorem{theorem}{\bf{Theorem}}[section] 
\newtheorem{lemma}[theorem]{\bf{Lemma}}     
\theoremstyle{definition}
\title[Riemann-Stieltjes operator]{Riemann-Stieltjes operators on some  general spaces on the unit ball and their essential norm}
\author{\sc\bf M. S. Al Ghafri, M. Hassanlou, Y. Estaremi and  Z. Huang  }
\address{ \sc M. S. Al Ghafri}
\email{mo86said@gmail.com}
\address{Department of Mathematics, Sultan Qaboos University, Muscat, Oman}
\address{ \sc M. Hassanlou}
\email{m.hassanlou@urmia.ac.ir}
\address{Engineering Faculty of Khoy, Urmia University of Technology, Urmia, Iran}
\address{\sc Y. Estaremi}
\email{y.estaremi@gu.ac.ir}
\address{Department of Mathematics, Faculty of Sciences, Golestan University, Gorgan, Iran.}
\keywords{Riemann-Stieltjes operator,  Essential norm, Zygmund type space.\\
\indent $^{*}$ Corresponding author}
\subjclass[2020]{47B38, 30H30}
\begin{document}

\vspace{1cm} \setcounter{page}{1} \thispagestyle{empty}
\maketitle
\begin{abstract}
The objective of this paper is to find an estimation of the essential norm of Riemann-Stieltjes operator from the spaces $F(p,q,s)$ ( general function space) and $H(p,q,\phi)$ (mixed-norm space)  into Zygmund-type space in the unit ball of $\mathbb{C}^n$.
\end{abstract}
\section{Introduction and Preliminaries}
Let $\mathbb{B}_n$ be the unit ball in $\mathbb{C}^n$ and $H(\mathbb{B}_n)$ be the class of all holomorphic functions on $\mathbb{B}_n$. For $f \in H(\mathbb{B}_n)$, with the Taylor expansion $f(z) = \sum_{|\beta| \geq 0} a_{\beta} z^{\beta}$, let
$\mathcal{R} f(z) = \sum_{|\beta| \geq 0} |\beta| a_{\beta} z^{\beta}$ be the radial derivative of $f$. Here $z = (z_1, z_2, \cdots, z_n) \in \mathbb{B}_n$, $\beta = (\beta_1, \beta_2, \cdots, \beta_n)$ is a multi-index, $z^{\beta} = z_1^{\beta_1} \cdots z_n^{\beta_n}$ and $|\beta| = \beta_1 + \beta_2 + \cdots + \beta_n$. It is easy to see that

$$\mathcal{R} f(z) = \sum_{j=1}^{n} z_j \frac{\partial f}{\partial z_j} (z).$$

For more information one can see \cite{zhu1}. The iterates of $\mathcal{R}$ is defined by $\mathcal{R}^m f (z) = \mathcal{R} (\mathcal{R}^{m-1} f (z))$, for $m \in \mathbb{N} \backslash \{ 1 \}$.

Let $0< p< \infty$, $0 \leq s < \infty$ and $-n-1 < q < \infty$. The space $F(p,q,s)$ is the space of all analytic functions in $\mathbb{B}_n$ for which
$$ \| f \|_{F(p,q,s)}^p = |f(0)|^p + \sup_{a \in \mathbb{B}_n} \int_{\mathbb{B}_n} |\mathcal{R}f(z)|^p (1-|z|^2)^q g^{s} (z,a) dV(z) < \infty. $$
Here $dV$ is normalized area measure on $\mathbb{B}_n$, $g(z,a) = \log | \frac{1}{\varphi_a (z)}|$, $\varphi_a (z) = \frac{a-z}{1-\overline{a}z}$.
Using complex gradient $\nabla$ and complex invariant gradient $\widetilde{\nabla}$, there are some equivalent noms for the functions in this space, see \cite{zhang} for more details. The above space, introduced in \cite{zhao1}, is called general function space which include some classes of functions like Bergman, Bloch, Hardy, BMOA and $Q_p$ spaces. See \cite{zhao1} for more details.

Beside the general space $F(p,q,s)$, we use another general class of analytic functions $H(p,q,\phi)$ which is called mixed-norm space.\\

The function $\phi$ on $[0,1)$ is called normal if it is a positive continuous function for which there are two constants $b > a > 0$ such that
\begin{itemize}
  \item [(i)] $\frac{\phi(r)}{(1-r)^a}$ is non-increasing in $[0,1)$ and
  $\frac{\phi(r)}{(1-r)^a} \downarrow 0$
  \item [(ii)] $\frac{\phi(r)}{(1-r)^b}$ is non-decreasing  in $[0,1)$ and
  $\frac{\phi(r)}{(1-r)^b} \uparrow \infty$
\end{itemize}
as $r \rightarrow 1^-$. By a normal function $\phi: \mathbb{B}_n \rightarrow [0,\infty )$ we mean that $\phi(|z|)$ is normal and also $\phi$ is radial, $\phi(z) = \phi(|z|)$.

 For $0 < p,q< \infty$ and normal function $\phi$ on $[0,1)$, the mixed-norm space $H(p,q,\phi)$ consists of all analytic functions in $\mathbb{B}_n$ for which
$$ \| f \|_{p,q, \phi}^p = \int_0^1 M_q^p (f,r) \frac{\phi^p (r)}{1-r} dr < \infty $$
where
$$ M_q (f,r) =\left ( \frac{1}{2\pi} \int_0^{2\pi} |f(r e^{i\theta})|^q d \theta \right)^{1/q}. $$
If $1\leq p < \infty$, then  $H(p,q,\phi)$ becomes a Banach space equipped with the norm
$\| .\|_{p,q,\phi}$. In the case $0 < p<1$, $\| .\|_{p,q,\phi}$ is a quasi-norm on
$H(p,q,\phi)$ and this space is a Fr$\acute{e}$chet space but is not a Banach space.
For $\alpha>-1$, $\phi(r) = (1-r)^{(\alpha+1)/p}$ is a normal function. If $p=q$, then $H(p,p,(1-r)^{(\alpha+1)/p})$ is weighted Bergman space $A_{\alpha}^p$ which is defined by
$$ A_{\alpha}^p = \{ f \in H(\mathbb{B}_n): \| f \|_{A_{\alpha}^p}^p = \int_{\mathbb{B}_n} |f(z)|^p (1-|z|^2)^{\alpha} dV(z) < \infty \}.$$

Let $\mu$ be a positive continuous function on [0, 1), such a function is called weight.  The Bloch-type space $\mathcal{B}_{\mu}$ and Zygmund-type space $\mathcal{Z}_{\mu}$ are defined as
\begin{align*}
\mathcal{B}_{\mu} = & \{ f \in H(\mathbb{B}_n): \| f \|_{\mathcal{B}_{\mu}} = |f(0)| + \sup_{z \in \mathbb{B}_n} \mu(|z|) |\mathcal{R}f(z)| < \infty \} \\
\mathcal{Z}_{\mu} = & \{ f \in  H(\mathbb{B}_n) : \| f \|_{\mathcal{Z}_{\mu}} = |f(0)| +  \sup_{z \in \mathbb{B}_n} \mu(|z|) |\mathcal{R}^2f(z)|< \infty \}.
\end{align*}
When $\mu(r) = (1-r^2)$ then we have classical Bloch and Zygmund space, $\mathcal{B}$ and $\mathcal{Z}$. If $\mu(r) = (1-r^2)^{\alpha}$, $\alpha>0$, then
$\alpha$-Bloch and Zygmund spaces are obtained,
$ \mathcal{B}^{\alpha} $ and $\mathcal{Z}^{\alpha}$.
Let $g \in H(\mathbb{B}_n)$. The Riemann-Stieltjes operator $L_g$ is defined by
$$ L_g f (z) = \int_0^1 \mathcal{R} f(tz) g(tz) \frac{dt}{t}, \ \ f \in H(\mathbb{B}_n), \ \ z \in \mathbb{B}_n.$$
Bounded and compact Riemann-Stieltjes operator from general space of analytic functions to Zygmund-type spaces and little Zygmund-type spaces on the unit ball are characterized in \cite{liu1}. Liu and Yu have investigated boundedness and compactness of $L_g$ from mixed-norm spaces to Zygmund-type spaces in \cite{liu2}. Essential norm of Riemann-Stieltjes operator on
weighted Bergman spaces with doubling weights is estimated in \cite{hu}. Volterra-type operators from $F(p,q,s)$ space to Bloch-Orlicz and Zygmund-Orlicz spaces are investigated in
\cite{liang2}. For more results on integral operators or generalized integral operators on some spaces of analytic functions one can see \cite{li1,li2,li3,liang1, liang, mi,pau, peng,st2, yang, zhao1, zhang1, zhu2}
and references therein.

Motivated by \cite{liu1,liu2}, we are going to find an estimation of the essential norm of the Riemann-Stieltjes operator from the general space of analytic functions, $F(p,q,s)$, and mixed-norm space, $H(p,q,\phi)$, into Zygmund-type spaces in the unit ball.

In this paper, for real scalars $A$ and $B$, the notation $\preceq$
means $A \leq C B$ for some positive constant $C$. Also, the notation
$A \approx B$ means $A \preceq B$ and $B \preceq A$. The essential norm of an operator $T$, $\| T \|_e$, is the distance
of $T$ from the space of all compact operators.

\section{Main results}
In this section, first we recall some basic lemmas from the literature for later use in the paper. Then we estimate the essential norm of Riemann-Stieltjes operator $L_g$ from the general function space and mixed-norm space into Zygmund-type space in the unit ball of $\mathbb{C}^n$.

\begin{lemma} \cite{st1}
  For every $f ,g \in H(\mathbb{B}_n)$,
  $$ \mathcal{R} L_g (f)(z) = \mathcal{R} f(z) g(z)\ \ \ \ \ \ z\in \mathbb{B}_n. $$
\end{lemma}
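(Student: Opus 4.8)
The plan is to exploit the interplay between the radial derivative $\mathcal{R}$ and differentiation along the complex ray $t \mapsto tz$. The crucial identity I would establish first is that, for any $h \in H(\mathbb{B}_n)$,
$$ (\mathcal{R} h)(tz) = t\,\frac{d}{dt}\, h(tz), \qquad 0 < t \le 1. $$
This is immediate from the Taylor expansion $h(w) = \sum_{|\beta|\ge 0} a_\beta w^\beta$: substituting $w = tz$ gives $h(tz) = \sum_{|\beta|\ge 0} a_\beta t^{|\beta|} z^\beta$, so $t\,\frac{d}{dt}h(tz) = \sum_{|\beta|\ge 0} |\beta|\, a_\beta t^{|\beta|} z^\beta$, which is exactly $(\mathcal{R}h)(tz)$ by the definition of $\mathcal{R}$. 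Equivalently one may obtain it from the chain-rule form $\mathcal{R}h(z) = \sum_{j=1}^n z_j \frac{\partial h}{\partial z_j}(z)$.

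Next I would set $F = L_g f$ and compute $F(\lambda z)$ for $\lambda \in (0,1]$ by rescaling the defining integral. Writing $F(\lambda z) = \int_0^1 \mathcal{R}f(t\lambda z)\, g(t\lambda z)\,\frac{dt}{t}$ and performing the change of variable $u = t\lambda$ (so that $\frac{dt}{t} = \frac{du}{u}$ and the limits $t : 0 \to 1$ become $u : 0 \to \lambda$) converts this into
$$ F(\lambda z) = \int_0^\lambda \mathcal{R}f(uz)\, g(uz)\,\frac{du}{u}. $$
The integrand is now independent of $\lambda$, so the Fundamental Theorem of Calculus applies directly and yields $\frac{d}{d\lambda} F(\lambda z) = \frac{1}{\lambda}\,\mathcal{R}f(\lambda z)\, g(\lambda z)$.

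Finally I would combine the two steps. Multiplying the last display by $\lambda$ and invoking the key identity applied to $h = F$ gives
$$ (\mathcal{R}F)(\lambda z) = \lambda\,\frac{d}{d\lambda} F(\lambda z) = \mathcal{R}f(\lambda z)\, g(\lambda z), $$
and setting $\lambda = 1$ produces $\mathcal{R}L_g f(z) = \mathcal{R}f(z)\, g(z)$, as claimed. The only point requiring genuine care, rather than routine manipulation, is the justification of the differentiation in $\lambda$: one must note that $\mathcal{R}f$ and $g$ are holomorphic, hence continuous, so the integrand $u \mapsto \mathcal{R}f(uz)\,g(uz)/u$ is continuous on $(0,1]$ and extends continuously to $u = 0$ (since $\mathcal{R}f$ vanishes to first order at the origin, killing the $1/u$ singularity), legitimizing both the change of variables near $u=0$ and the application of the Fundamental Theorem of Calculus. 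I expect this integrability-at-the-origin check to be the main, though modest, technical obstacle.
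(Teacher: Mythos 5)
The paper does not prove this lemma at all; it is quoted verbatim from Stevi\'c \cite{st1}, so there is no in-paper argument to compare against. Your proof is correct and is essentially the standard derivation: the identity $(\mathcal{R}h)(tz)=t\,\frac{d}{dt}h(tz)$, the rescaling $u=t\lambda$ turning $L_gf(\lambda z)$ into $\int_0^{\lambda}\mathcal{R}f(uz)g(uz)\,\frac{du}{u}$, and the Fundamental Theorem of Calculus. Your attention to the behaviour at $u=0$ is the right technical point: since $\mathcal{R}f(0)=0$, the integrand $\mathcal{R}f(uz)g(uz)/u$ extends continuously to $u=0$, which legitimizes both the convergence of the defining integral and the differentiation under the integral sign. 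Nothing is missing; this is a complete, self-contained proof of the cited fact.
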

\begin{lemma} \label{l24} \cite{zhang1}
  Let $0 < p,s <\infty$, $\max \{ -n-1, -s-1 \} < q < \infty$ and
  $f \in F(p,q,s)$. Then $f \in \mathcal{B}^{\frac{n+1+q}{p}}$ and
  $$\| f \|_{\mathcal{B}^{\frac{n+1+q}{p}}} \leq C \| f \|_{F(p,q,s)}.$$
\end{lemma}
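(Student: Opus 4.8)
The plan is to prove the embedding via a pointwise estimate: it suffices to show that
$$(1-|z|^2)^{\frac{n+1+q}{p}}\,|\mathcal{R}f(z)| \preceq \| f \|_{F(p,q,s)}$$
for every $z \in \mathbb{B}_n$, since the term $|f(0)|$ appears in both norms and is trivially controlled. Fixing a point $w \in \mathbb{B}_n$, the starting observation is that $\mathcal{R}f$ is holomorphic, so $|\mathcal{R}f|^p$ is plurisubharmonic. Applying the sub-mean-value inequality over a Bergman metric ball $D(w,r)$ of a fixed radius $r$ centered at $w$ yields
$$|\mathcal{R}f(w)|^p \preceq \frac{1}{V(D(w,r))} \int_{D(w,r)} |\mathcal{R}f(z)|^p \, dV(z).$$

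Next I would invoke the standard geometric estimates for Bergman balls (see \cite{zhu1}): the normalized volume satisfies $V(D(w,r)) \approx (1-|w|^2)^{n+1}$, and on $D(w,r)$ one has $1-|z|^2 \approx 1-|w|^2$ uniformly, with constants depending only on $r$ and $n$. This last comparison holds irrespective of the sign of $q$, so that $(1-|z|^2)^q \approx (1-|w|^2)^q$ throughout the ball; this will let me reinsert the weight $(1-|z|^2)^q$ into the integral at the cost of a factor comparable to $(1-|w|^2)^{-q}$.

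The key step is to match the resulting integral with the $F(p,q,s)$ norm by specializing the supremum over $a$ to the choice $a = w$. On a fixed-radius Bergman ball (equivalently, a pseudohyperbolic ball) one has $|\varphi_w(z)| \le \rho < 1$, whence $g(z,w) = \log \tfrac{1}{|\varphi_w(z)|} \ge \log \tfrac{1}{\rho} > 0$; since the singularity of $g$ at $z = w$ only strengthens a lower bound, we obtain $g^s(z,w) \ge c > 0$ on $D(w,r)$ for a constant $c$ depending only on $r,s$. Combining the three displays gives
$$(1-|w|^2)^{n+1+q}\,|\mathcal{R}f(w)|^p \preceq \int_{D(w,r)} |\mathcal{R}f(z)|^p (1-|z|^2)^q g^s(z,w)\, dV(z) \le \| f \|_{F(p,q,s)}^p,$$
which is the desired pointwise bound after taking $p$-th roots. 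Taking the supremum over $w \in \mathbb{B}_n$ and adding back $|f(0)| \le \| f \|_{F(p,q,s)}$ completes the argument.

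I expect the main obstacle to be making the geometric input precise and uniform: confirming the volume comparison $V(D(w,r)) \approx (1-|w|^2)^{n+1}$ and the boundary-distance comparison on $D(w,r)$, and pinning down the constant $\rho < 1$ that relates the Bergman radius $r$ to the corresponding pseudohyperbolic radius, so that the lower bound on $g^s(\cdot,w)$ is genuinely independent of $w$. These facts are classical, so the hypotheses $0 < p,s < \infty$ and $\max\{-n-1,-s-1\} < q$ serve mainly to guarantee that $F(p,q,s)$ is a well-defined nontrivial space; they are not otherwise needed for this one-sided estimate.
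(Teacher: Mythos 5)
The paper does not prove this lemma at all --- it is imported verbatim from the cited reference \cite{zhang1} --- so there is no internal argument to compare against; what you have written is a self-contained proof, and it is essentially correct. The chain of estimates (sub-mean-value bound on the Bergman ball $D(w,r)$, the comparisons $V(D(w,r))\approx(1-|w|^2)^{n+1}$ and $1-|z|^2\approx1-|w|^2$ on $D(w,r)$, the choice $a=w$ in the supremum, and the uniform lower bound $g^s(z,w)\ge(\log\frac{1}{\rho})^s$ coming from $|\varphi_w(z)|\le\rho=\tanh r<1$) is the standard route to this embedding and each step is sound. The one place where your phrasing papers over a genuine issue is the first display: for $n>1$ the Bergman ball $D(w,r)$ is an ellipsoid of Euclidean volume $\approx(1-|w|^2)^{n+1}$ whose largest inscribed Euclidean ball centered at $w$ has volume only $\approx(1-|w|^2)^{2n}$, so the plain sub-mean-value inequality over an inscribed ball yields a constant that degenerates as $|w|\to1$. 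The correct statement you need is the M\"obius-invariant version obtained by pulling back via $\varphi_w$ (Lemma 2.24 of \cite{zhu1}), namely $|h(w)|^p\preceq V(D(w,r))^{-1}\int_{D(w,r)}|h|^p\,dV$ for $h$ holomorphic; since you already cite \cite{zhu1} for the geometric input, this is a matter of pointing at the right lemma rather than a gap in the idea. Your closing remark is also accurate: the hypothesis $q>-s-1$ plays no role in this one-sided estimate and only ensures $F(p,q,s)$ is nontrivial.
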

According to the Proposition 2.1 of \cite{Lc}, for $\alpha >0$ and $m \in \mathbb{N}$, we have $f \in \mathcal{B}^{\alpha}$ if and only if
\begin{equation}\label{r20}
  \sup_{z \in \mathbb{B}_n} |\mathcal{R}^m f(z)| (1-|z|^2)^{\alpha + m -1} < \infty.
\end{equation}
\begin{lemma} \label{l20} \cite{st2}
Assume that $m \in \mathbb{N}$, $p,q \in (0,\infty)$, $\phi$ is normal and $f \in H(p,q,\phi)$. Then there exists a positive constant $C$ independent of $f$ such that
\begin{equation*}
  |\mathcal{R}^m f(z)| \leq  \frac{C |z|}{\phi(|z|) (1-|z|^2)^{m + n/q }}\| f \|_{p,q,\phi}, \ \ \ z \in \mathbb{B}_n.
\end{equation*}
\end{lemma}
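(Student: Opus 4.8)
\emph{Proof proposal.}
The plan is to pass from the norm, which controls only the radial average $M_q(f,r)$, to a genuine pointwise bound on $\mathcal R^m f$, in three stages: bound the integral means, convert a single–radius mean into a pointwise value of $f$, and finally differentiate via Cauchy's estimate. The first stage is to extract from the definition of $\|\cdot\|_{p,q,\phi}$ the growth estimate
\[
M_q(f,r)\ \preceq\ \frac{\|f\|_{p,q,\phi}}{\phi(r)},\qquad 0<r<1 .
\]
The point is that $r\mapsto M_q(f,r)$ is non-decreasing for holomorphic $f$ (subharmonicity of $|f|^q$), while on the interval $[r,\tfrac{1+r}{2}]$ the quantity $1-s$ is comparable to $1-r$, so by normality $\phi(s)\approx\phi(r)$ there. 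Restricting the defining integral to this interval and using $\int_{r}^{(1+r)/2}\frac{ds}{1-s}=\log 2$ gives $\|f\|_{p,q,\phi}^{\,p}\succeq M_q^p(f,r)\,\phi^p(r)$, which is the displayed bound.

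Next I would turn this single–radius mean into a pointwise estimate for $f$ itself,
\[
|f(z)|\ \preceq\ \frac{\|f\|_{p,q,\phi}}{\phi(|z|)\,(1-|z|^2)^{n/q}} .
\]
Here lies the main obstacle: obtaining the sharp exponent $n/q$. A crude sub-mean-value inequality over a Euclidean ball of radius $\approx 1-|z|$ loses too much and produces an exponent like $(2n-1)/q$. Instead I would dilate, setting $f_R(w)=f(Rw)$ with $R=\tfrac{1+|z|}{2}$, so that $f_R\in H^q(\mathbb B_n)$ with $\|f_R\|_{H^q}=M_q(f,R)$, and apply the standard point evaluation for the Hardy space of the ball, $|h(w)|\preceq (1-|w|^2)^{-n/q}\|h\|_{H^q}$, which comes from the Poisson--Szeg\H{o} majorant of $|h|^q$. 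Writing $z=Rw$ with $|w|=|z|/R$, one checks $1-|w|^2\approx 1-|z|$ and, using normality once more, $\phi(R)\approx\phi(|z|)$; combining with the first stage yields the displayed bound.

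Finally I would recover the derivative factor $(1-|z|^2)^{-m}$ by Cauchy's estimate. Since $\mathcal R^m f(z)=\sum_{|\beta|\ge 0}|\beta|^m a_\beta z^\beta$ is a finite combination of partial derivatives $\partial^\alpha f$ with $|\alpha|\le m$, multiplied by polynomials bounded on $\mathbb B_n$, it suffices to estimate $\partial^\alpha f(z)$ for $|\alpha|\le m$. On the ball $B\big(z,\tfrac{1-|z|}{2}\big)\subset\mathbb B_n$ every point $w$ satisfies $1-|w|\approx 1-|z|$, so the second stage together with normality gives $\sup_{B}|f|\preceq \|f\|_{p,q,\phi}\,\phi(|z|)^{-1}(1-|z|^2)^{-n/q}$; Cauchy's inequality on this ball of radius $\approx 1-|z|$ then produces the extra factor $(1-|z|)^{-m}$, and $1-|z|\approx 1-|z|^2$ gives
\[
|\mathcal R^m f(z)|\ \preceq\ \frac{\|f\|_{p,q,\phi}}{\phi(|z|)\,(1-|z|^2)^{m+n/q}} .
\]
The extra factor $|z|$ in the numerator of the statement reflects that $\mathcal R^m f(0)=0$ for $m\ge1$; it can be inserted because the estimate is trivially finite for $|z|$ bounded away from $1$, where one may freely multiply by $|z|\le1$, while near the boundary $|z|\approx1$.
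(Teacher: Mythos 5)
The paper does not prove this lemma: it is quoted verbatim from the reference \cite{st2}, so there is no in-paper argument to compare against. Your three-stage proof is, in substance, the standard one from the literature and it is essentially correct. Stage one (monotonicity of $M_q(f,\cdot)$ plus $\phi(s)\approx\phi(r)$ on $[r,\tfrac{1+r}{2}]$, which follows from both halves of the normality condition) correctly yields $M_q(f,r)\preceq \|f\|_{p,q,\phi}/\phi(r)$. Stage two is the right way to get the sharp exponent $n/q$: you correctly diagnose that a Euclidean sub-mean-value argument only gives $(2n-1)/q$, and the dilation $f_R$ with $R=\tfrac{1+|z|}{2}$ combined with the $H^q(\mathbb{B}_n)$ point-evaluation bound $|h(w)|\preceq(1-|w|^2)^{-n/q}\|h\|_{H^q}$ (valid for all $0<q<\infty$ via the Poisson--Szeg\H{o} majorant of $|h|^q$) gives exactly the claimed growth estimate, since $1-|z/R|\approx 1-|z|\approx 1-R$ and $\phi(R)\approx\phi(|z|)$. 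Stage three (Cauchy estimates on $B(z,\tfrac{1-|z|}{2})$, on which $1-|w|\approx 1-|z|$ and $\phi(|w|)\approx\phi(|z|)$) correctly produces the extra factor $(1-|z|^2)^{-m}$. Note also that you are implicitly reading $M_q(f,r)$ as the integral mean over the unit sphere of $\mathbb{C}^n$, which is what the exponent $n/q$ requires; the paper's displayed formula for $M_q$ is a one-variable misprint.

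The one step that is not right as written is the insertion of the factor $|z|$. Your sentence ``one may freely multiply by $|z|\le 1$'' goes the wrong way: multiplying an upper bound by a quantity $\le 1$ strengthens the claim, so for $|z|$ near the origin the factor cannot simply be inserted. The correct (and short) fix is the one your first clause already hints at: write $\mathcal{R}^m f=\sum_{j=1}^n z_j\,\partial_j(\mathcal{R}^{m-1}f)$, so that the Cauchy--Schwarz inequality gives $|\mathcal{R}^m f(z)|\le |z|\,|\nabla(\mathcal{R}^{m-1}f)(z)|$, and then bound $|\nabla(\mathcal{R}^{m-1}f)(z)|$ by $C\|f\|_{p,q,\phi}\,\phi(|z|)^{-1}(1-|z|^2)^{-(m+n/q)}$ exactly as in your stage three (the gradient involves partial derivatives of $f$ of order at most $m$ with bounded polynomial coefficients). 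For $|z|\ge\tfrac12$ the factor is of course harmless since $|z|\approx 1$. With that one-line repair the argument is complete.
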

\begin{lemma}  \label{l21} \cite{li1}
 Let $0 < p,s <\infty$, $\max \{ -n-1, -s-1 \} < q < \infty$, $\mu$ be a weight and $g \in H(\mathbb{B}_n)$.  Then
$L_g :F(p,q,s) \rightarrow \mathcal{Z}_{\mu}$ is compact if and only if $L_g :F(p,q,s) \rightarrow \mathcal{Z}_{\mu}$ is bounded and
for any bounded sequence $\{ f_k \}$ in $F(p,q,s)$, which converges uniformly to zero on compact subsets of $\mathbb{B}_n$, as $k \rightarrow \infty$,  we have
$\| L_g f_k \|_{\mathcal{Z}_{\mu}} \rightarrow 0 $, as $k \rightarrow \infty$.
\end{lemma}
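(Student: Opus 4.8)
The plan is to prove this standard compactness criterion by a normal-families argument, exploiting the growth estimates available for $F(p,q,s)$ and the structure of the Zygmund-type norm. Throughout I would use the identity $\mathcal{R} L_g(f) = (\mathcal{R}f)\, g$ recorded in the first lemma, which yields $\mathcal{R}^2 L_g(f) = \mathcal{R}\big((\mathcal{R}f)\, g\big)$ and thereby lets me read off the behaviour of $L_g f$ in $\mathcal{Z}_\mu$ directly from $f$ and its radial derivatives; note in particular that $L_g f(0)=0$ since $\mathcal{R}f(0)=0$.

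For the necessity direction, compactness immediately gives boundedness. To obtain the sequential condition I would take a bounded sequence $\{f_k\}$ in $F(p,q,s)$ converging to $0$ uniformly on compact subsets and argue by contradiction: if $\|L_g f_k\|_{\mathcal{Z}_\mu}$ did not tend to $0$, some subsequence would be bounded below, yet compactness of $L_g$ would force a further subsequence $L_g f_{k_j}$ to converge in $\mathcal{Z}_\mu$ to some $h$. The key point is that convergence in $\mathcal{Z}_\mu$ is stronger than local uniform convergence (since $\mu$ is positive and continuous, hence bounded away from $0$ on compacta, control of $\mu(|z|)|\mathcal{R}^2 h|$ together with the value at $0$ recovers local uniform convergence of $h$ itself). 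On the other hand, by Cauchy estimates $f_{k_j}\to 0$ together with all its derivatives locally uniformly, so $\mathcal{R}^2 L_g f_{k_j} = \mathcal{R}((\mathcal{R}f_{k_j}) g) \to 0$ locally uniformly while $L_g f_{k_j}(0)=0$; thus $h=0$, contradicting the lower bound.

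For the sufficiency direction I would start from a bounded sequence $\{f_k\}$ in $F(p,q,s)$ and produce a subsequence whose $L_g$-images converge in $\mathcal{Z}_\mu$. By Lemma \ref{l24} the sequence is bounded in $\mathcal{B}^{(n+1+q)/p}$, and the characterization \eqref{r20} then gives a uniform growth bound for $f_k$ and its derivatives on compact subsets; Montel's theorem supplies a subsequence $f_{k_j}$ converging uniformly on compacta to some $f \in H(\mathbb{B}_n)$. A Fatou-type argument on the defining supremum-integral norm shows $f \in F(p,q,s)$, so that $h_j := f_{k_j} - f$ is again a bounded sequence in $F(p,q,s)$, now converging to $0$ uniformly on compact subsets. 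Applying the hypothesis to $\{h_j\}$ gives $\|L_g f_{k_j} - L_g f\|_{\mathcal{Z}_\mu} \to 0$, exhibiting a convergent subsequence of $\{L_g f_k\}$ and hence the compactness of $L_g$.

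The main obstacle I anticipate is the technical glue rather than the overall scheme: verifying cleanly that norm convergence in $\mathcal{Z}_\mu$ implies local uniform convergence (so that point- and derivative-evaluations are continuous), and that the normal limit $f$ genuinely lies in $F(p,q,s)$ with controlled norm via lower semicontinuity of the norm under local uniform limits. Once these two facts are established, both implications follow from the subsequence-extraction argument described above.
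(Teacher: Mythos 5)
Your argument is correct and is essentially the standard proof of this compactness criterion; the paper itself gives no proof of Lemma \ref{l21}, citing it from the reference of Li and Stevi\'{c}, where the same scheme (necessity by contradiction using that $\mathcal{Z}_{\mu}$-convergence dominates local uniform convergence, sufficiency via the Bloch-type embedding of Lemma \ref{l24}, Montel's theorem, and lower semicontinuity of the $F(p,q,s)$-norm under local uniform limits) is used. The two technical points you flag — recovering local uniform convergence from $|f(0)|$ and $\sup_z \mu(|z|)|\mathcal{R}^2 f(z)|$ (which works because $\mathcal{R}f(0)=\mathcal{R}^2f(0)=0$, so $f$ is reconstructed by two radial integrations), and the Fatou argument for each fixed $a\in\mathbb{B}_n$ followed by a supremum over $a$ — both go through without difficulty.
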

\begin{lemma}  \label{l22} \cite{li2}
 Let $0 < p,q <\infty$,  $\mu$ be a weight, $\phi$ be a normal function and $g \in H(\mathbb{B}_n)$. Then
$L_g :H(p,q,\phi) \rightarrow \mathcal{Z}_{\mu}$ is compact if and only if $L_g :H(p,q,\phi) \rightarrow \mathcal{Z}_{\mu}$ is bounded and for any bounded sequence $\{ f_k \}$ in $H(p,q,\phi)$ which converges uniformly to zero on compact subsets of $\mathbb{B}_n$ as $k \rightarrow \infty$,  we have
$\| L_g f_k \|_{\mathcal{Z}_{\mu}} \rightarrow 0 $ as $k \rightarrow \infty$.
\end{lemma}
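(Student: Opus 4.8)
The plan is to establish both directions of this sequential compactness criterion by playing off the $\mathcal{Z}_\mu$-norm topology against uniform convergence on compact subsets of $\mathbb{B}_n$, with Montel's theorem supplying the needed normal families.

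For the sufficiency direction, I would take an arbitrary norm-bounded sequence $\{f_k\}$ in $H(p,q,\phi)$ and aim to extract a subsequence whose $L_g$-image converges in $\mathcal{Z}_\mu$. First I would invoke the growth estimate behind Lemma \ref{l20}: a uniform bound on $\|f_k\|_{p,q,\phi}$ yields, after integrating the radial-derivative bound, a uniform bound for $|f_k(z)|$ on each compact subset of $\mathbb{B}_n$, so $\{f_k\}$ is a normal family. By Montel's theorem some subsequence $\{f_{k_j}\}$ converges uniformly on compact subsets to a holomorphic $f$, and a Fatou-type lower-semicontinuity argument applied to the defining integral of $\|\cdot\|_{p,q,\phi}$ shows $f \in H(p,q,\phi)$ with $\|f\|_{p,q,\phi} \le \sup_k \|f_k\|_{p,q,\phi}$. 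Then $\{f_{k_j} - f\}$ is bounded in $H(p,q,\phi)$ and tends to $0$ uniformly on compact subsets, so the hypothesis gives $\|L_g(f_{k_j} - f)\|_{\mathcal{Z}_\mu} \to 0$, i.e. $L_g f_{k_j} \to L_g f$ in $\mathcal{Z}_\mu$. Hence $L_g$ carries bounded sequences to relatively compact ones and is therefore compact.

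For the necessity direction, compactness already forces boundedness. Let $\{f_k\}$ be bounded in $H(p,q,\phi)$ with $f_k \to 0$ uniformly on compact subsets. By compactness every subsequence of $\{L_g f_k\}$ has a further subsequence converging in $\mathcal{Z}_\mu$ to some $h$. The crucial observation is that $\mathcal{Z}_\mu$-convergence implies uniform convergence on compact subsets: since $\mu$ is positive and continuous on $[0,1)$, it is bounded below by a positive constant on each ball $\{|z| \le r_0\}$, so the $\mathcal{Z}_\mu$-norm controls $\mathcal{R}^2 h$ uniformly on compacta, and together with the value at the origin this recovers $h$ itself uniformly on compacta by integrating along radii. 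On the other hand, from the very definition of $L_g$ (or from the relation $\mathcal{R}L_g f = g\,\mathcal{R}f$) one checks that $f_k \to 0$ uniformly on compacta forces $\mathcal{R}f_k \to 0$ uniformly on compacta, hence $L_g f_k \to 0$ pointwise. Thus $h = 0$, and since every subsequence admits a further subsequence tending to $0$ in $\mathcal{Z}_\mu$, the whole sequence satisfies $\|L_g f_k\|_{\mathcal{Z}_\mu} \to 0$.

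The main obstacle is getting the two topology comparisons exactly right. Establishing that norm-bounded families in $H(p,q,\phi)$ are normal rests on the point-evaluation bounds derived from Lemma \ref{l20}, and one must also confirm via lower semicontinuity that the uniform-on-compacta limit remains inside $H(p,q,\phi)$ so that the difference sequence stays bounded. Conversely, showing that $\mathcal{Z}_\mu$-convergence dominates uniform-on-compacta convergence is delicate because the $\mathcal{Z}_\mu$-norm only controls the $\mu$-weighted second radial derivative; recovering control of $f$ and $\mathcal{R}f$ on compact sets from $\mathcal{R}^2 f$ and $f(0)$ requires the radial integration argument, and the normality of $\mu$ away from the boundary is what makes that possible.
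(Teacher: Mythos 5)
Your argument is correct and is essentially the standard proof of this criterion (the paper itself offers no proof here, deferring entirely to the citation of Li and Stevi\'c, where the same Montel/normal-families scheme is used): sufficiency via extraction of a locally uniformly convergent subsequence plus Fatou to keep the limit in $H(p,q,\phi)$, and necessity via the subsequence-of-subsequences argument together with the observation that the $\mathcal{Z}_\mu$-norm dominates uniform convergence on compacta after two radial integrations. The only point worth tightening is that integrating the bound on $\mathcal{R}f_k$ from Lemma \ref{l20} controls $|f_k(z)-f_k(0)|$, so you should also record the standard point-evaluation estimate $|f_k(0)|\preceq \|f_k\|_{p,q,\phi}$ before invoking Montel.
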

By the above mentioned lemmas, in the next theorem we find an estimation of the essential norm of the Riemann-Stieltjes operator $L_g : F(p,q,s) \rightarrow \mathcal{Z}_{\mu}$. For simplicity, put $\alpha = \frac{n+1 + q}{p}$.
\begin{theorem} \label{th2}
Let $0 < p,s <\infty$, $\max \{ -n-1, -s-1 \} < q < \infty$, $\mu$ be a weight and $g \in H(\mathbb{B}_n)$.  If $L_g :F(p,q,s) \rightarrow \mathcal{Z}_{\mu}$ is bounded, then
\begin{equation*}
  \| L_g \|_{e,F(p,q,s) \rightarrow \mathcal{Z}_{\mu} } \approx \max
 \left \{ \limsup_{|z| \rightarrow 1} \frac{\mu(|z|) |g(z)|}{(1-|z|^2)^{\alpha +1}}, \ \limsup_{|z| \rightarrow 1} \frac{\mu(|z|) |\mathcal{R}g(z)|}{(1-|z|^2)^{\alpha}} \right \}.
\end{equation*}
\end{theorem}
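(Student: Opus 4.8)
The whole argument rests on differentiating $L_g f$ twice. By Lemma 2.1 and the Leibniz rule $\mathcal{R}(uv)=u\,\mathcal{R}v+v\,\mathcal{R}u$ for the radial derivative,
\[\mathcal{R}^2 L_g f(z)=g(z)\,\mathcal{R}^2 f(z)+\mathcal{R}g(z)\,\mathcal{R}f(z),\]
and since $\mathcal{R}f(0)=0$ forces $L_g f(0)=0$, the Zygmund norm becomes
\[\|L_g f\|_{\mathcal{Z}_\mu}=\sup_{z\in\mathbb{B}_n}\mu(|z|)\bigl|g(z)\,\mathcal{R}^2 f(z)+\mathcal{R}g(z)\,\mathcal{R}f(z)\bigr|.\]
Feeding Lemma \ref{l24} into the characterization \eqref{r20} with $m=1,2$ gives the pointwise bounds $|\mathcal{R}f(z)|\preceq(1-|z|^2)^{-\alpha}\|f\|_{F(p,q,s)}$ and $|\mathcal{R}^2 f(z)|\preceq(1-|z|^2)^{-\alpha-1}\|f\|_{F(p,q,s)}$; these two inequalities drive every estimate below. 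Write $M_1,M_2$ for the two $\limsup$'s in the statement and $M=\max\{M_1,M_2\}$.

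For the upper bound $\|L_g\|_e\preceq M$ I would use the dilations $C_r f(z)=f(rz)$, $0<r<1$. Each $C_r$ is compact on $F(p,q,s)$ (a norm-bounded family is normal, and along a convergent subsequence $f_r\to f$ in norm on the closed ball), so $L_g C_r$ is compact and $\|L_g\|_e\le\liminf_{r\to1}\|L_g-L_g C_r\|$. Since $\mathcal{R}^j f_r(z)=(\mathcal{R}^j f)(rz)$ and $1-|rz|^2\ge1-|z|^2$, the growth bounds give $|\mathcal{R}^j(f-f_r)(z)|\preceq(1-|z|^2)^{-\alpha-j+1}$ uniformly in $r$ for $\|f\|\le1$. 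Splitting $\sup_z$ at $|z|=\delta$: on $\{|z|\le\delta\}$ the quantities $g,\mathcal{R}g$ are bounded while $\mathcal{R}^j(f-f_r)\to0$ uniformly (Cauchy estimates, uniformly over $\|f\|\le1$), so this part tends to $0$ as $r\to1$; on $\{|z|>\delta\}$ the growth bounds reduce the expression to a constant multiple of $\sup_{|z|>\delta}\bigl[\mu(|z|)|g(z)|(1-|z|^2)^{-\alpha-1}+\mu(|z|)|\mathcal{R}g(z)|(1-|z|^2)^{-\alpha}\bigr]$. Letting $\delta\to1$ yields $\|L_g\|_e\preceq M$.

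For the lower bound I would prove $\|L_g\|_e\succeq M_1$ and $\|L_g\|_e\succeq M_2$ separately, using the test functions $f_w^{(b)}(z)=(1-|w|^2)^{b+1-\alpha}(1-\langle z,w\rangle)^{-b}$, which for $b$ large are uniformly bounded in $F(p,q,s)$ and tend to $0$ on compacta as $|w|\to1$. A direct computation gives, at $z=w$, $\mathcal{R}f_w^{(b)}(w)\approx(1-|w|^2)^{-\alpha}$ and $\mathcal{R}^2 f_w^{(b)}(w)\approx(1-|w|^2)^{-\alpha-1}$, so a single family couples the two terms. Choosing $\{z_k\}$ with $|z_k|\to1$ realizing $M_1$ and forming $F_k=f_{z_k}^{(b_1)}-\tfrac{b_1}{b_2}f_{z_k}^{(b_2)}$ with $b_1\ne b_2$ makes $\mathcal{R}F_k(z_k)=0$ while keeping $\mathcal{R}^2 F_k(z_k)\approx(1-|z_k|^2)^{-\alpha-1}$, so the cross term disappears and $\|L_g F_k\|_{\mathcal{Z}_\mu}\succeq\mu(|z_k|)|g(z_k)|(1-|z_k|^2)^{-\alpha-1}$. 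For $M_2$ I would instead pick the coefficient cancelling the leading $(1-|z_k|^2)^{-\alpha-1}$ part of $\mathcal{R}^2$, so that the $g$-term drops to order $(1-|z_k|^2)^{-\alpha}$ and vanishes in the limit (as $\mu|g|(1-|z_k|^2)^{-\alpha-1}$ stays bounded), leaving $\|L_g G_k\|_{\mathcal{Z}_\mu}\succeq\mu(|z_k|)|\mathcal{R}g(z_k)|(1-|z_k|^2)^{-\alpha}$. Since $F_k,G_k\to0$ on compacta, every compact $K$ satisfies $\|KF_k\|_{\mathcal{Z}_\mu},\|KG_k\|_{\mathcal{Z}_\mu}\to0$, so $\|L_g-K\|$ dominates both right-hand sides, and taking the infimum over $K$ gives $\|L_g\|_e\succeq M$.

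The main obstacle is verifying that the rational test functions $f_w^{(b)}$, and their two-term combinations, are genuinely bounded in the \emph{full} $F(p,q,s)$-norm rather than merely the weaker $\mathcal{B}^{\alpha}$-norm: one must control $\sup_a\int_{\mathbb{B}_n}|\mathcal{R}f_w^{(b)}(z)|^p(1-|z|^2)^q g^{s}(z,a)\,dV(z)$ uniformly in $w$. This reduces to a sharp Forelli--Rudin-type estimate for $\sup_a\int_{\mathbb{B}_n}|1-\langle z,w\rangle|^{-\lambda}(1-|z|^2)^q g^{s}(z,a)\,dV(z)$ with $\lambda=p(b+1)$, whose decay $(1-|w|^2)^{n+1+q-\lambda}$ cancels the normalizing factor exactly when $b$ is chosen large; keeping this bound uniform over $a\in\mathbb{B}_n$, where the logarithmic weight $g^{s}(z,a)$ enters, is the delicate point. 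Once the uniform $F(p,q,s)$-bound and the elementary decoupling of the two derivative terms are secured, combining the upper and lower estimates gives the claimed $\approx$.
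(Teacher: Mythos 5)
Your proposal follows essentially the same route as the paper: the upper bound via compact dilation operators $K_r f=f_r$ with the $\sup$ split at $|z|=\delta$ and the growth estimates from Lemma \ref{l24} and \eqref{r20}, and the lower bound via the same two-parameter family of rational test functions $(1-|z_i|^2)^{b+1-\alpha}(1-\overline{z_i}z)^{-b}$ combined in pairs so that either $\mathcal{R}$ or the leading part of $\mathcal{R}^2$ cancels at $z_i$ (the paper's $f_i$ and $h_i$ are exactly your $F_k$ and $G_k$ with $b_1=2\alpha$, $b_2=\alpha$). The only cosmetic difference is in disposing of the cross term for the $\mathcal{R}g$ estimate — you let the $g$-term vanish using the boundedness of $\mu|g|(1-|z|^2)^{-\alpha-1}$, while the paper absorbs it into $\|L_g\|_{e}$ — and, like the paper, you leave the uniform $F(p,q,s)$-boundedness of the test functions as an asserted computation.
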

\begin{proof}
Since $L_g$ is bounded, then there exists $C >0$ such that

$$ \| L_g f \|_{\mathcal{Z}_{\mu}} \leq C \| f \|_{F(p,q,s)}, \ \ \ \ \ \ \text{for all} \ \ \ f \in F(p,q,s).$$

By applying this inequality to $f_1 (z) =z$, we obtain
\begin{equation}\label{r15}
 \sup_{z \in \mathbb{B}_n} \mu (|z|) | \mathcal{R} g(z)|<\infty.
\end{equation}
Also, by applying the inequality to $f_2 (z) = z^2$, we have
\begin{equation}\label{r16}
 \sup_{z \in \mathbb{B}_n} \mu (|z|) |  g(z)|<\infty.
\end{equation}
Let $\{ z_i \}_{i \in \mathbb{N}}$ be a sequence in $\mathbb{D}$, such that $|z_i| \rightarrow 1$. We define the sequence of functions as follows

$$f_i (z) = \frac{(1-|z_i|^2)^{\alpha +1}}{(1-\overline{z_i}z)^{2\alpha}}  - 2 \frac{1-|z_i|^2}{(1-\overline{z_i}z)^{\alpha}}, \ \ \ \ \ i\in \mathbb{N}.$$

Direct calculations shows that for every $i\in \mathbb{N}$, $f_i \in F(p,q,s)$, $\sup_{i} \| f_i \|_{F(p,q,s)} \leq C$ and $f_i \rightarrow 0$, uniformly on compact subsets of $\mathbb{B}_n$. Also,
$$\mathcal{R} f_i (z_i) =0\ \ \ \ \text{and} \ \ \ \  \mathcal{R}^2 f_i (z_i)= 2 \alpha^2 \frac{|z_i|^4}{(1-|z_i|^2)^{\alpha +1}}.$$
So for any compact operator $K: F(p,q,s) \rightarrow \mathcal{Z}_{\mu}$ we have

$$\lim_{i \rightarrow \infty} \| K f_i \|_{\mathcal{Z}_{\mu}} =0.$$

By the above observations we have
\begin{align*}
  \|  L_g - K \|_{F(p,q,s) \rightarrow \mathcal{Z}_{\mu}} &\succeq
  \limsup_{i \rightarrow \infty} \|  L_g f_i - K f_i \|_{\mathcal{Z}_{\mu}} \\
  & \geq
 \limsup_{i \rightarrow \infty}\| L_g f_i \|_{\mathcal{Z}_{\mu}}-
 \limsup_{i \rightarrow \infty}\| K f_i \|_{\mathcal{Z}_{\mu}}\nonumber\\
 & = \limsup_{i \rightarrow \infty} \mu(|z_i|) |\mathcal{R}^2 ( L_g f_i ) (z_j)| \nonumber \\
 &\succeq  \limsup_{i \rightarrow \infty} \frac{\mu(|z_i|) | g(z_i)| |z_i|^4}{ (1-|z_i|^2)^{\alpha +1} }.
 \end{align*}
Therefore,
\begin{align} \label{r12}
\| L_g \|_{e, F(p,q,s) \rightarrow \mathcal{Z}_{\mu}}= \inf_{K}\| L_g - K \|_{F(p,q,s) \rightarrow \mathcal{Z}_{\mu}}  \succeq
  \limsup_{|z| \rightarrow 1} \frac{\mu(|z|) | g(z)| }{  (1-|z|^2)^{\alpha +1} }.
 \end{align}
Now we define the sequence $\{ h_i \}$ as follows
$$ h_i (z) = \frac{(1-|z_i|^2)^{\alpha +1}}{(1-\overline{z_i}z)^{2\alpha}}  - \frac{2 + 4 \alpha}{\alpha+1} \frac{1-|z_i|^2}{(1-\overline{z_i}z)^{\alpha}}.  $$
Then $\{ h_j \}$ is a bounded sequence in $F(p,q,s)$ which converges to 0 uniformly on compact subsets of $\mathbb{B}_n$. Also
$$ \mathcal{R} h_i (z_i) =\mathcal{R}^2 h_i (z_i) = - \frac{2 \alpha^2}{\alpha+1} \frac{|z_j|^2}{(1-|z_i|^2)^{\alpha}}. $$
For any compact operator $K: F(p,q,s) \rightarrow \mathcal{Z}_{\mu} $, using the definition of the operator norm and the norm in  Zygmund-type space and \eqref{r12}, we have
\begin{align*}
  \|  L_g - K \|_{F(p,q,s) \rightarrow \mathcal{Z}_{\mu}} &\succeq
  \limsup_{i \rightarrow \infty} \|  L_g h_i - K h_i \|_{\mathcal{Z}_{\mu}} \\
  & \geq
 \limsup_{i \rightarrow \infty}\| L_g h_i \|_{\mathcal{Z}_{\mu}}-
 \limsup_{i \rightarrow \infty}\| K h_i \|_{\mathcal{Z}_{\mu}}\nonumber\\
 & = \limsup_{i \rightarrow \infty} \mu(|z_i|) |\mathcal{R}^2 ( L_g h_i ) (z_i)| \nonumber \\
 & = \limsup_{i \rightarrow \infty} \mu(|z_i|) |\mathcal{R}^2 h_i  (z_i) g(z_i) + \mathcal{R} h_i (z_i) \mathcal{R} g(z_i)|  \nonumber \\
 &=  \limsup_{i \rightarrow \infty} \mu(|z_i|) \left | - \frac{2 \alpha^2}{\alpha+1} \frac{  g(z_i) |z_i|^2}{ (1-|z_i|^2)^{\alpha} } - \frac{2 \alpha^2}{\alpha+1} \frac{  \mathcal{R} g(z_i) |z_i|^2}{ (1-|z_i|^2)^{\alpha} } \right | \\
 & \geq  \limsup_{i \rightarrow \infty}   \frac{2 \alpha^2}{\alpha+1} \frac{   \mu(|z_i|) |\mathcal{R} g(z_i)| |z_i|^2}{ (1-|z_i|^2)^{\alpha} }   - \limsup_{i \rightarrow \infty} \frac{2 \alpha^2}{\alpha+1} \frac{ \mu(|z_i|) |g(z_i)| |z_i|^2}{ (1-|z_i|^2)^{\alpha} } \\
& =  \limsup_{|z| \rightarrow 1}  \frac{ \mu(|z|) |\mathcal{R} g(z)|}{ (1-|z|^2)^{\alpha} }   - \limsup_{|z| \rightarrow 1} \frac{ \mu(|z|) |g(z)| }{ (1-|z|^2)^{\alpha} } \\
& \geq   \limsup_{|z| \rightarrow 1}  \frac{ \mu(|z|) |\mathcal{R} g(z)|}{ (1-|z|^2)^{\alpha} }   - \limsup_{|z| \rightarrow 1} \frac{\mu(|z|)  |g(z)| }{ (1-|z|^2)^{\alpha +1} } \\
& \succeq  \limsup_{|z| \rightarrow 1}  \frac{ \mu(|z|) |\mathcal{R} g(z)|}{ (1-|z|^2)^{\alpha} }   -  \| L_g \|_{e, F(p,q,s) \rightarrow \mathcal{Z}_{\mu}}.
 \end{align*}
Hence
\begin{align*}
\| L_g \|_{e, F(p,q,s) \rightarrow \mathcal{Z}_{\mu}}=  & \inf_{K}\| L_g - K \|_{F(p,q,s) \rightarrow \mathcal{Z}_{\mu}}  \\
 \succeq &  \limsup_{|z| \rightarrow 1}   \frac{  \mu(|z|) |\mathcal{R} g(z)|}{ (1-|z|^2)^{\alpha} }   -  \| L_g \|_{e, F(p,q,s) \rightarrow \mathcal{Z}_{\mu}},
 \end{align*}
and so
\begin{align} \label{r113}
\| L_g \|_{e, F(p,q,s) \rightarrow \mathcal{Z}_{\mu}} \succeq
  \limsup_{|z| \rightarrow 1} \frac{   \mu(|z|) |\mathcal{R} g(z)|}{ (1-|z|^2)^{\alpha} }.
 \end{align}
From \eqref{r12} and \eqref{r113}, the lower estimate of the essential norm is achieved.\\

For the upper estimate, let $\{r_i\}\subset (0,1)$ be a sequence such that $r_i\rightarrow 1$, as $i\rightarrow \infty$. Then as $f_{r_i} \rightarrow f$ uniformly on compact subsets of $\mathbb{B}_n$, as $i \rightarrow \infty$, in which $f_r (z) = f(rz)$. Define the operators $K_r$ on $F(p,q,s)$, $K_r f(z) = f_r(z)$, where $0 < r <1$. Then it is clear that $K_r$ is a compact operator on $F(p,q,s)$. Hence
Hence for any positive integer $i$, the operator $L_g K_{r_i} : F(p,q,s) \rightarrow \mathcal{Z}_{\mu}$ is compact and so
\begin{align} \label{r14}
\|L_g \|_{e, F(p,q,s) \rightarrow \mathcal{Z}_{\mu}} \leq \limsup_{i\rightarrow \infty} \| L_g - L_g K_{r_i}\|_{F(p,q,s) \rightarrow \mathcal{Z}_{\mu}}.
 \end{align}
Therefore for every $f\in F(p,q,s)$, with $\|f\|_{F(p,q,s)}\leq 1$ we have
\begin{align*}
\| L_g f - L_g K_{r_i} f \|_{\mathcal{Z}_{\mu}} = &  \sup_{z \in \mathbb{B}_n} \mu(|z|)
|\mathcal{R}^2 (L_g f )(z) - \mathcal{R}^2 (L_g K_{r_i} f)(z)| \\
= & \sup_{z \in \mathbb{B}_n} \mu(|z|)
|\mathcal{R}^2  f (z) g(z) + \mathcal{R} f(z) \mathcal{R} g(z) \\
&  - r_i \mathcal{R}^2 f_{r_i}(z) g(z) - \mathcal{R} f_{r_i}(z) \mathcal{R} g(z)| \\
\leq &  \sup_{z \in \mathbb{B}_n} \mu(|z|)
|\mathcal{R}^2 f (z) - r_i \mathcal{R}^2 f_{r_i}(z)| | g(z)| \\
& + \sup_{z \in \mathbb{B}_n} \mu(|z|) |\mathcal{R} f (z) - \mathcal{R}f_{r_i}(z) ||\mathcal{R} g(z)| \\
= & \sup_{|z| \leq \delta } \mu(|z|) |\mathcal{R}^2 f (z) - r_i \mathcal{R}^2 f_{r_i}(z)| | g(z)|  \\
& + \sup_{|z| > \delta } \mu(|z|) |\mathcal{R}^2 f (z) - r_i \mathcal{R}^2 f_{r_i}(z)| | g(z)| \\
& + \sup_{|z| \leq \delta } \mu(|z|) |\mathcal{R} f (z) - \mathcal{R}f_{r_i}(z) ||\mathcal{R} g(z)| \\
& + \sup_{|z| > \delta } \mu(|z|) |\mathcal{R} f (z) - \mathcal{R}f_{r_i}(z) ||\mathcal{R} g(z)|,
\end{align*}
where $\delta \in (0,1)$ is fixed. By Weierstrass theorem, $\mathcal{R} f_{r_i} \rightarrow \mathcal{R}f$ and $\mathcal{R}^2 f_{r_i} \rightarrow \mathcal{R}^2 f$ uniformly on compact subsets of $\mathbb{B}_n$, as $i \rightarrow \infty$. Since the ball $\{ z: |z| \leq \delta \}$ is a compact subset of $\mathbb{B}_n$, then we have

\begin{align*}
\limsup_{i\rightarrow \infty} \sup_{|z| \leq \delta} \mu(|z|) |\mathcal{R}^2 f (z) - r_i \mathcal{R}^2 f_{r_i}(z)| | g(z)| = & 0 \\
\limsup_{i\rightarrow \infty} \sup_{|z| \leq \delta} \mu(|z|) |\mathcal{R} f (z) - \mathcal{R}f_{r_i}(z) ||\mathcal{R} g(z)| =& 0.
\end{align*}
Therefore using Lemma \ref{l24} and \eqref{r20} we have
\begin{align*}
\| L_g f - L_g K_{r_i} f \|_{\mathcal{Z}_{\mu}} \leq &
 \sup_{|z| > \delta } \mu(|z|) |\mathcal{R}^2 f (z) - r_i \mathcal{R}^2 f_{r_i}(z)| | g(z)| \\
& + \sup_{|z| > \delta } \mu(|z|) |\mathcal{R} f (z) - \mathcal{R}f_{r_i}(z) ||\mathcal{R} g(z)| \\
\preceq & \sup_{|z| > \delta } \frac{\mu(|z|) |g(z)|}{(1-|z|^2)^{\alpha +1}}+  \sup_{|z| > \delta } \frac{r_i \mu(|z|) |g(z)|}{(1-|r_i z|^2)^{\alpha +1}} \\
& +
\sup_{|z| > \delta } \frac{\mu(|z|) |\mathcal{R}g(z)|}{(1-|z|^2)^{\alpha}} \| f \|_{\mathcal{B}^{\alpha}}
+ \sup_{|z| > \delta } \frac{r_i \mu(|z|) |\mathcal{R}g(z)|}{(1-|r_i z|^2)^{\alpha}} \| f \|_{\mathcal{B}^{\alpha}}
\\
\leq &
\sup_{|z| > \delta } \frac{\mu(|z|) |g(z)|}{(1-|z|^2)^{\alpha +1}}+  \sup_{|z| > \delta } \frac{r_i \mu(|z|) |g(z)|}{(1-|r_i z|^2)^{\alpha +1}} \\
& +
\sup_{|z| > \delta } \frac{\mu(|z|) |\mathcal{R}g(z)|}{(1-|z|^2)^{\alpha}} \| f \|_{F(p,q,s)}
+ \sup_{|z| > \delta } \frac{r_i \mu(|z|) |\mathcal{R}g(z)|}{(1-|r_i z|^2)^{\alpha}} \| f \|_{F(p,q,s)},
\end{align*}
letting $i \rightarrow \infty$, we obtain
\begin{align*}
\| L_g f - L_g K_{r_i} f \|_{\mathcal{Z}_{\mu}} \preceq
2 \sup_{|z| > \delta } \frac{\mu(|z|) |g(z)|}{(1-|z|^2)^{\alpha +1}}
+
2 \sup_{|z| > \delta } \frac{\mu(|z|) |\mathcal{R}g(z)|}{(1-|z|^2)^{\alpha}}.
\end{align*}
When $\delta \rightarrow 1$, from \eqref{r14} we have
\begin{align*}
\|L_g \|_{e, F(p,q,s) \rightarrow \mathcal{Z}_{\mu}} \preceq & \limsup_{|z| \rightarrow 1} \frac{\mu(|z|) |g(z)|}{(1-|z|^2)^{\alpha +1}} +
\limsup_{|z| \rightarrow 1} \frac{\mu(|z|) |\mathcal{R}g(z)|}{(1-|z|^2)^{\alpha}},
 \end{align*}
and consequently we have an upper estimate of the essential norm.
\end{proof}
In the sequel we find an estimate for the essential norm of the Riemann-Stieltjes operator $L_g : H(p,q,\phi) \rightarrow \mathcal{Z}_{\mu}$.
\begin{theorem}
  Let $0 < p,q <\infty$,  $\mu$ be a weight, $\phi$ is a normal function and $g \in H(\mathbb{B}_n)$. If
$L_g :H(p,q,\phi) \rightarrow \mathcal{Z}_{\mu}$ is bounded, then
\begin{equation*}
  \| L_g \|_{e,H(p,q,\phi) \rightarrow \mathcal{Z}_{\mu} } \approx \max
 \left \{ \limsup_{|z| \rightarrow 1} \frac{\mu(|z|) | g(z)|}{\phi(|z|)(1-|z|^2)^{2 + \frac{n}{q}}}, \ \limsup_{|z| \rightarrow 1} \frac{\mu(|z|) | \mathcal{R}g(z)|}{\phi(|z|) (1-|z|^2)^{1+ \frac{n}{q}}} \right \}.
\end{equation*}
\end{theorem}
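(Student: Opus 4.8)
Here is my plan for proving the second theorem.

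The plan is to reproduce, mutatis mutandis, the architecture of the proof of Theorem \ref{th2}, the only structural change being that the Bloch-type growth control furnished there by Lemma \ref{l24} and \eqref{r20} is now replaced by the mixed-norm growth estimate of Lemma \ref{l20}, while the compactness criterion invoked is Lemma \ref{l22} in place of Lemma \ref{l21}. Throughout, abbreviate $N=1+\frac{n}{q}$, so that Lemma \ref{l20} reads
\begin{equation*}
|\mathcal{R}f(z)|\preceq \frac{\|f\|_{p,q,\phi}}{\phi(|z|)(1-|z|^2)^{N}},\qquad |\mathcal{R}^2 f(z)|\preceq \frac{\|f\|_{p,q,\phi}}{\phi(|z|)(1-|z|^2)^{N+1}},
\end{equation*}
and the two quantities inside the $\max$ carry exactly the exponents $N+1=2+\frac{n}{q}$ (attached to $|g|$) and $N=1+\frac{n}{q}$ (attached to $|\mathcal{R}g|$), each divided by $\phi(|z|)$. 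As in Theorem \ref{th2}, boundedness of $L_g$ already forces $\sup_z\mu(|z|)|g(z)|<\infty$ and $\sup_z\mu(|z|)|\mathcal{R}g(z)|<\infty$ by testing on low-degree monomials.

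For the lower estimate I fix a sequence $\{z_i\}$ with $|z_i|\to 1$ and build both test families out of the normalized blocks
\begin{equation*}
u_{c}(z)=\frac{(1-|z_i|^2)^{\,c-n/q}}{\phi(|z_i|)\,(1-\overline{z_i}z)^{c}},\qquad c>b+\tfrac{n}{q},
\end{equation*}
where $b$ is the exponent in normality condition (ii); in this ``peak'' range the norm asymptotic gives $\|u_c\|_{p,q,\phi}\approx 1$ and $u_c\to 0$ uniformly on compact subsets of $\mathbb{B}_n$. A clean computation yields $\mathcal{R}u_c(z_i)=c|z_i|^2/[\phi(|z_i|)(1-|z_i|^2)^{N}]$ and $\mathcal{R}^2u_c(z_i)=c|z_i|^2/[\phi(|z_i|)(1-|z_i|^2)^{N}]+c(c+1)|z_i|^4/[\phi(|z_i|)(1-|z_i|^2)^{N+1}]$. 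Picking $c_1>c_2$ and setting $f_i=u_{c_1}-\frac{c_1}{c_2}u_{c_2}$ kills the radial derivative, $\mathcal{R}f_i(z_i)=0$, while leaving $\mathcal{R}^2f_i(z_i)=c_1(c_1-c_2)|z_i|^4/[\phi(|z_i|)(1-|z_i|^2)^{N+1}]$; since $\mathcal{R}^2(L_gf_i)=\mathcal{R}^2f_i\,g+\mathcal{R}f_i\,\mathcal{R}g$ by the first lemma of this section, testing against an arbitrary compact $K$ and using $\|Kf_i\|_{\mathcal{Z}_\mu}\to 0$ gives, exactly as in \eqref{r12}, the bound for the $|g|$ term. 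A second family $h_i=u_{c_1}-\frac{c_1(c_1+1)}{c_2(c_2+1)}u_{c_2}$, whose coefficient is tuned to cancel the most singular $(1-|z_i|^2)^{-(N+1)}$ part of $\mathcal{R}^2h_i(z_i)$, satisfies $\mathcal{R}h_i(z_i)=\mathcal{R}^2h_i(z_i)\approx 1/[\phi(|z_i|)(1-|z_i|^2)^{N}]$ and produces the $|\mathcal{R}g|$ term through the same triangle-inequality bootstrap that carries \eqref{r12} into \eqref{r113} (using $(1-|z_i|^2)^{-N}\le(1-|z_i|^2)^{-(N+1)}$). The maximum of the two is the desired lower bound.

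For the upper estimate I again use the compact dilation operators $K_rf=f_r$, $f_r(z)=f(rz)$, so that each $L_gK_{r_i}$ is compact and $\|L_g\|_e\le\limsup_{i}\|L_g-L_gK_{r_i}\|$. For $\|f\|_{p,q,\phi}\le 1$ I expand $\mathcal{R}^2(L_gf)=\mathcal{R}^2f\,g+\mathcal{R}f\,\mathcal{R}g$, split $\sup_{z\in\mathbb{B}_n}$ into $|z|\le\delta$ and $|z|>\delta$, and let $i\to\infty$. The inner piece vanishes because, by the Weierstrass theorem, $\mathcal{R}f_{r_i}\to\mathcal{R}f$ and $\mathcal{R}^2f_{r_i}\to\mathcal{R}^2f$ uniformly on the compact set $\{|z|\le\delta\}$, while the tail piece is dominated, through Lemma \ref{l20} with $m=1,2$ (applied to $f$ and to $f$ at the point $r_iz$), by a constant multiple of
\begin{equation*}
\sup_{|z|>\delta}\frac{\mu(|z|)|g(z)|}{\phi(|z|)(1-|z|^2)^{2+\frac{n}{q}}}+\sup_{|z|>\delta}\frac{\mu(|z|)|\mathcal{R}g(z)|}{\phi(|z|)(1-|z|^2)^{1+\frac{n}{q}}}.
\end{equation*}
Letting $\delta\to 1$ yields the matching upper estimate.

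The step I expect to be the main obstacle is the construction and control of the blocks $u_c$: unlike in the $F(p,q,s)$ setting, the mixed norm forces the explicit weight $\phi(|z_i|)^{-1}$ into the definition, and both the two-sided norm asymptotic $\|(1-\overline{z_i}z)^{-c}\|_{p,q,\phi}\approx \phi(|z_i|)/(1-|z_i|^2)^{c-n/q}$ and the uniform decay of $u_c$ on compact sets hold only once $c$ is taken beyond the threshold $b+\frac{n}{q}$ dictated by the monotonicity and normalization conditions (i)--(ii) on $\phi$. Establishing that asymptotic, verifying $\sup_i\|f_i\|_{p,q,\phi},\sup_i\|h_i\|_{p,q,\phi}\preceq 1$, and confirming the exact cancellation making $\mathcal{R}h_i(z_i)=\mathcal{R}^2h_i(z_i)$ are the delicate points; once they are in place, the remainder is a transcription of the proof of Theorem \ref{th2}.
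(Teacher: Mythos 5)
Your proposal is correct and follows essentially the same route as the paper: the same $\phi$-normalized kernel blocks with two exponents, the same coefficient tuning to make $\mathcal{R}f_i(z_i)=0$ for the $|g|$ term and $\mathcal{R}h_i(z_i)=\mathcal{R}^2h_i(z_i)$ for the $|\mathcal{R}g|$ term with the same triangle-inequality bootstrap, and the same upper bound via compact dilations, the $|z|\le\delta$ versus $|z|>\delta$ split, and Lemma \ref{l20}. The norm asymptotic $\|u_c\|_{p,q,\phi}\approx 1$ that you flag as the delicate point is exactly what the paper relies on (citing the standard test-function estimates for mixed-norm spaces) when it asserts $\sup_i\|f_i\|_{p,q,\phi}\leq C$.
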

\begin{proof}
Let $\{ z_i \}_{i \in \mathbb{N}}$ be a sequence in $\mathbb{D}$ such that $|z_i| \rightarrow 1$. Define the sequence of functions as follows
$$ f_i (z) = (\gamma +1)\frac{(1-|z_i|^2)^{b +1}}{\phi(|z_i|)(1-\overline{z_i}z)^{\beta}}  - \gamma \frac{(1-|z_i|^2)^{b+2}}{\phi(|z_j|)(1-\overline{z_i}z)^{\beta +1}}, \ \ \i\in \mathbb{N},$$
where $\gamma = b + 1 + \frac{n}{q}$ and $b$ comes from the definition of the normal function  $\phi$. It is clear that for each $i\in \mathbb{N}$, $f_i \in H(p,q,\phi)$, $\sup_{i} \| f_i \|_{p,q,\phi} \leq C$ and $f_i \rightarrow 0$ uniformly on compact subsets of $\mathbb{B}_n$. Also
$$ \mathcal{R} f_i (z_i) =0, \ \ \ \mathcal{R}^2 f_i (z_i)= -\gamma(\gamma +1) \frac{|z_i|^4}{\phi(|z_i|)(1-|z_i|^2)^{2+ \frac{n}{q}}} \ \ \ \ i\in \mathbb{N}.$$

Hence for any compact operator $K: H(p,q,\phi) \rightarrow \mathcal{Z}_{\mu} $ we have
$ \lim_{n \rightarrow \infty} \| K f_i \|_{\mathcal{Z}_{\mu}} =0$
and so
\begin{align*}
  \|  L_g - K \|_{H(p,q,\phi) \rightarrow \mathcal{Z}_{\mu}} &\succeq
  \limsup_{i \rightarrow \infty} \|  L_g f_i - K f_i \|_{\mathcal{Z}_{\mu}} \\
  &\succeq
 \limsup_{i \rightarrow \infty}\| L_g f_i \|_{\mathcal{Z}_{\mu}}-
 \limsup_{i \rightarrow \infty}\| K f_i \|_{\mathcal{Z}_{\mu}}\nonumber\\
 & \succeq \limsup_{i \rightarrow \infty} \mu(|z_j|) |\mathcal{R}^2 ( L_g f_i ) (z_i)| \nonumber \\
 &\succeq  \limsup_{i \rightarrow \infty} \frac{\mu(|z_j|) | g(z_i)| |z_i|^4}{ \phi(|z_i|)(1-|z_i|^2)^{2+ \frac{n}{q}}}.
 \end{align*}
Therefore we get that
\begin{align} \label{r17}
\| L_g \|_{e, H(p,q,\phi) \rightarrow \mathcal{Z}_{\mu}}= \inf_{K}\| L_g - K \|_{H(p,q,\phi) \rightarrow \mathcal{Z}_{\mu}}  \succeq
  \limsup_{|z| \rightarrow 1} \frac{\mu(|z|) | g(z)| }{ \phi(|z|) (1-|z|^2)^{2 + \frac{n}{q}}}.
 \end{align}
Now we define the sequence $\{ h_j \}$ as follows
$$ h_i (z) = (\beta +2) \frac{(1-|z_i|^2)^{b +1}}{\phi(|z_i|)(1-\overline{z_i}z)^{\beta}}  - \beta \frac{(1-|z_i|^2)^{b+2}}{(1-\overline{z_i}z)^{\beta +1}}.  $$

It is easy to see that $\{ h_i \}$ is a bounded sequence in $H(p,q,\phi)$, which converges to $0$, uniformly on compact subsets of $\mathbb{B}_n$. Also

$$ \mathcal{R} h_i (z_i) =\mathcal{R}^2 h_i (z_i) = \beta   \frac{|z_i|^2}{\phi(|z_i|)(1-|z_i|^2)^{1+\frac{n}{q}}}. $$

Using \eqref{r17} we get that
$$ - \limsup_{|z| \rightarrow 1} \frac{\mu(|z|) | g(z)| }{ \phi(|z|) (1-|z|^2)^{1 + \frac{n}{q}}} \geq - \limsup_{|z| \rightarrow 1} \frac{\mu(|z|) | g(z)| }{ \phi(|z|) (1-|z|^2)^{2 + \frac{n}{q}}}  \succeq - \| L_g \|_{e, H(p,q,\phi) \rightarrow \mathcal{Z}_{\mu}},$$
and then for any compact operator $K: H(p,q,\phi) \rightarrow \mathcal{Z}_{\mu} $
we have
\begin{align*}
  \|  L_g - K \|_{H(p,q,\phi) \rightarrow \mathcal{Z}_{\mu}} &\succeq
  \limsup_{i \rightarrow \infty} \|  L_g h_i - K h_i \|_{\mathcal{Z}_{\mu}} \\
  &\geq
 \limsup_{i \rightarrow \infty}\| L_g h_i \|_{\mathcal{Z}_{\mu}}-
 \limsup_{i \rightarrow \infty}\| K h_i \|_{\mathcal{Z}_{\mu}}\nonumber\\
 & = \limsup_{i \rightarrow \infty} \mu(|z_i|) |\mathcal{R}^2 ( L_g h_i ) (z_j)| \nonumber \\
 & = \limsup_{j \rightarrow \infty} \mu(|z_j|) |\mathcal{R}^2 h_i  (z_i) g(z_j) + \mathcal{R} h_j (z_j) \mathcal{R} g(z_j)|  \nonumber \\
 &=  \limsup_{j \rightarrow \infty} \mu(|z_i|) \left | \gamma  \frac{  g(z_j) |z_j|^2}{ \phi(|z_i|)(1-|z_i|^2)^{1+ \frac{n}{q}} } + \gamma  \frac{  \mathcal{R} g(z_j) |z_j|^2}{  \phi(|z_i|) (1-|z_j|^2)^{1+ \frac{n}{q}} } \right | \\
 & \geq  \limsup_{i \rightarrow \infty}  \gamma \frac{ \mu(|z_i|) |\mathcal{R} g(z_i)| |z_i|^2}{ \phi(|z_i|)(1-|z_i|^2)^{1+ \frac{n}{q}} }   - \limsup_{i \rightarrow \infty} \gamma  \frac{ \mu(|z_i|) |g(z_i)| |z_i|^2}{ \phi(|z_i|) (1-|z_i|^2)^{1+ \frac{n}{q}} } \\
& \succeq  \limsup_{|z| \rightarrow 1}  \frac{ \mu(|z|) |\mathcal{R} g(z)|}{ \phi(|z|)(1-|z|^2)^{1+ \frac{n}{q}} }   - \limsup_{|z| \rightarrow 1} \frac{ \mu(|z|) |g(z)| }{\phi(|z|) (1-|z|^2)^{1+ \frac{n}{q}} } \\
& \succeq  \limsup_{|z| \rightarrow 1}  \frac{ \mu(|z|) |\mathcal{R} g(z)|}{ \phi(|z|) (1-|z|^2)^{1+ \frac{n}{q}} }   -  \| L_g \|_{e, H(p,q,\phi) \rightarrow \mathcal{Z}_{\mu}}.
 \end{align*}
Hence
\begin{align*}
\| L_g \|_{e, H(p,q,\phi) \rightarrow \mathcal{Z}_{\mu}}=  & \inf_{K}\| L_g - K \|_{H(p,q,\phi) \rightarrow \mathcal{Z}_{\mu}}  \\
 \succeq &  \limsup_{|z| \rightarrow 1} \frac{ \mu(|z|) |\mathcal{R} g(z)|}{ \phi(|z|) (1-|z|^2)^{1+ \frac{n}{q}} }   -  \| L_g \|_{e, H(p,q,\phi) \rightarrow \mathcal{Z}_{\mu}},
 \end{align*}
and so
\begin{align} \label{r13}
\| L_g \|_{e, H(p,q,\phi) \rightarrow \mathcal{Z}_{\mu}} \succeq
  \limsup_{|z| \rightarrow 1}  \frac{ \mu(|z|) |\mathcal{R} g(z)|}{ \phi(|z|) (1-|z|^2)^{1+ \frac{n}{q}} }.
 \end{align}
Therefor we have a lower estimate. In order to prove the upper estimate, let $\{r_i\}\subset (0,1)$ be a sequence such that $r_i\rightarrow 1$ as $i\rightarrow \infty$.
Then $f_{r_i} \rightarrow f$ uniformly on compact subsets of $\mathbb{B}_n$, as $i \rightarrow \infty$. We define the operators $T_r$ on $H(p,q,\phi)$, $T_r f(z) = f_r(z)$, where $0 < r <1$. Then $T_r$ is a compact operator on $H(p,q,\phi)$.
So, for any positive integer $i$, the operator $L_g T_{r_i} : H(p,q,\phi) \rightarrow \mathcal{Z}_{\mu}$ is compact. Thus
\begin{align} \label{r18}
\|L_g \|_{e, H(p,q,\phi) \rightarrow \mathcal{Z}_{\mu}} \leq \limsup_{j\rightarrow \infty} \| L_g - L_g T_{r_i}\|_{H(p,q,\phi) \rightarrow \mathcal{Z}_{\mu}}.
 \end{align}
It will be sufficient to compute  $ \| L_g - L_g T_{r_i}\|_{H(p,q,\phi) \rightarrow \mathcal{Z}_{\mu}}$. Let $f\in H(p,q,\phi)$, with $\|f\|_{p,q,\phi}\leq 1$. Then similar to the proof of Theorem \ref{th2} we have
\begin{align*}
\| L_g f - L_g T_{r_i} f \|_{\mathcal{Z}_{\mu}} \leq  & \sup_{|z| \leq \delta } \mu(|z|) |\mathcal{R}^2 f (z) - r_i \mathcal{R}^2 f_{r_j}(z)| | g(z)|  \\
& + \sup_{|z| > \delta } \mu(|z|) |\mathcal{R}^2 f (z) - r_i \mathcal{R}^2 f_{r_i}(z)| | g(z)| \\
& + \sup_{|z| \leq \delta } \mu(|z|) |\mathcal{R} f (z) - \mathcal{R}f_{r_i}(z) ||\mathcal{R} g(z)| \\
& + \sup_{|z| > \delta } \mu(|z|) |\mathcal{R} f (z) - \mathcal{R}f_{r_i}(z) ||\mathcal{R} g(z)|,
\end{align*}
where $\delta \in (0,1)$ is fixed. By Weierstrass theorem, $\mathcal{R} f_{r_i} \rightarrow \mathcal{R}f$ and $\mathcal{R}^2 f_{r_i} \rightarrow \mathcal{R}^2 f$ uniformly on compact subsets of
$\mathbb{B}_n$, as $i \rightarrow \infty$. So
\begin{align*}
\limsup_{i\rightarrow \infty} \sup_{|z| \leq \delta} \mu(|z|) |\mathcal{R}^2 f (z) - r_j \mathcal{R}^2 f_{r_i}(z)| | g(z)| = & 0 \\
\limsup_{i\rightarrow \infty} \sup_{|z| \leq \delta} \mu(|z|) |\mathcal{R} f (z) - \mathcal{R}f_{r_i}(z) ||\mathcal{R} g(z)| =& 0.
\end{align*}
Here we use the fact that $\sup_{z \in \mathbb{B}_n} \mu (|z|) | \mathcal{R} g(z)|<\infty$ and $ \sup_{z \in \mathbb{B}_n} \mu (|z|) |  g(z)|<\infty$ which is obtained from boundedness of the operator.
Using Lemma \ref{l20} we get that
\begin{align*}
\| L_g f - L_g T_{r_i} f \|_{\mathcal{Z}_{\mu}} \leq  &
 \sup_{|z| > \delta } \mu(|z|) |\mathcal{R}^2 f (z) - r_i \mathcal{R}^2 f_{r_i}(z)| | g(z)| \\
& + \sup_{|z| > \delta } \mu(|z|) |\mathcal{R} f (z) - \mathcal{R}f_{r_i}(z) ||\mathcal{R} g(z)| \\
\preceq & \sup_{|z| > \delta } \frac{\mu(|z|) |g(z)| |z|}{\phi(| z|)(1-| z|^2)^{2+ \frac{n}{q}}} \| f \|_{p,q,\phi}+  \sup_{|z| > \delta } \frac{r_i \mu(|z|) |g(z)| |r_i z|}{\phi(|r_i z|)(1-|r_i z|^2)^{2+ \frac{n}{q}}} \| f \|_{p,q,\phi} \\
& +
\sup_{|z| > \delta } \frac{\mu(|z|) |\mathcal{R}g(z)| | z|}{\phi(| z|)(1-|z|^2)^{1 + \frac{n}{q}}} \| f \|_{p,q,\phi}
+ \sup_{|z| > \delta } \frac{r_i \mu(|z|) |\mathcal{R}g(z)||r_i z|}{\phi(|r_i z|)(1-|r_i z|^2)^{1+ \frac{n}{q}}}  \| f \|_{p,q,\phi}
\\
\leq & \sup_{|z| > \delta } \frac{\mu(|z|) |g(z)| |z|}{\phi(| z|)(1-| z|^2)^{2+ \frac{n}{q}}} +  \sup_{|z| > \delta } \frac{r_i \mu(|z|) |g(z)| |r_i z|}{\phi(|r_i z|)(1-|r_i z|^2)^{2+ \frac{n}{q}}}  \\
& +
\sup_{|z| > \delta } \frac{\mu(|z|) |\mathcal{R}g(z)| | z|}{\phi(| z|)(1-|z|^2)^{1 + \frac{n}{q}}}
+ \sup_{|z| > \delta } \frac{r_i \mu(|z|) |\mathcal{R}g(z)||r_i z|}{\phi(|r_i z|)(1-|r_i z|^2)^{1+ \frac{n}{q}}}.
\end{align*}
If $i \rightarrow \infty$, then
\begin{align*}
\| L_g f - L_g T_{r_i} f \|_{\mathcal{Z}_{\mu}} \preceq
2 \sup_{|z| > \delta } \frac{\mu(|z|) |g(z)| |z|}{\phi(| z|)(1-| z|^2)^{2+ \frac{n}{q}}} +
2 \sup_{|z| > \delta } \frac{\mu(|z|) |\mathcal{R}g(z)| | z|}{\phi(| z|)(1-|z|^2)^{1 + \frac{n}{q}}}.
\end{align*}
If $\delta \rightarrow 1$, then \eqref{r18} implies that
\begin{equation*}
\|L_g \|_{e, H(p,q,\phi) \rightarrow \mathcal{Z}_{\mu}}
\preceq
\limsup_{|z| \rightarrow 1}  \frac{\mu(|z|) |g(z)|}{\phi(| z|)(1-| z|^2)^{2+ \frac{n}{q}}} +
\limsup_{|z| \rightarrow 1}  \frac{\mu(|z|) |\mathcal{R}g(z)| }{\phi(| z|)(1-|z|^2)^{1 + \frac{n}{q}}}.
\end{equation*}

\end{proof}


\begin{thebibliography}{99}

\bibitem{hu}
L. Hu,  S. Li and R. Yang, Essential norm of Riemann-Stieltjes operator on weighted Bergman spaces with doubling weights, Georgian Mathematical Journal, 2024. https://doi.org/10.1515/gmj-2023-2110

\bibitem{Lc}
B. Li and C. Ouyang,  Higher radial derivative of Bloch type functions, Acta Math. Sci. Ser. B 22(4) (2002), 433-445.

\bibitem{li1}
S. Li and S. Stevi\'{c}, Riemann-Stieltjes operators on Hardy spaces in the unit ball of $\mathbb{C}^n$, Bull. Belg. Math. Soc. Simon Stevin 14(4) (2007), 621-628.

\bibitem{li2}
S. Li and S. Stevi\'{c}, Riemann-Stieltjes-type integral operators on the unit ball in $\mathbb{C}^n$, Complex Var. Elliptic Equ. 52(6) (2007), 495-517.

\bibitem{li3}
S. Li and S. Stevi\'{c}, Riemann-Stieltjes operators between different weighted Bergman spaces, Bull. Belg. Math. Soc. Simon Stevin 15(4) (2008), 677-686.

\bibitem{liang2}
Y. Liang, H. Zeng and Ze-Hua Zhou, Volterra-type operators from $F(p,q,s)$ space to Bloch-Orlicz and Zygmund-Orlicz spaces, Filomat 34(4) (2020), 1359-1381.


\bibitem{liang1}
Y. Liang and Z. H. Zhou, Some integral-type operators from $F(p,q,s)$ spaces to mixed-norm spaces on the unit ball, Math. Nachr.
287 (11-12) (2014),  1298-1311.

\bibitem{liang}
Y. Liang, Integral-type operators from $F(p,q,s)$ space to $\alpha$-Bloch-Orlicz and $\beta$-Zygmund-Orlicz spaces, Complex Anal. Oper. Theory 12 (2018), 169-194.

\bibitem{liu2}
Y. Liu and  Y. Yu, Riemann-Stieltjes operator from mixed norm spaces
to Zygmund-type spaces on the unit ball, Taiwanese J. Math. 17(5) (2013), 1751-1764.

\bibitem{liu1}
Y. Liu, Y. Yu and X. Liu, Riemann-Stieltjes operator from the general space to Zygmund-type spaces on the unit ball, Complex Anal. Oper. Theory 9 (2015), 985-997.

\bibitem{mi}
S. Miihkinen, P. J. Nieminen and W. Xu, Essential norms of integration operators on weighted Bergman spaces, J. Math. Anal. Appl. 450(1) (2017), 229-243.

\bibitem{pau}
J. Pau and R. Zhao, Carleson measures, Riemann-Stieltjes and multiplication operators on a general family of function spaces, Integr. Equ. Oper. Theory  78 (2014), 483-514.

\bibitem{peng}
R. Peng Y. Wu and F. Deng, Carleson measures, Riemann-Stieltjes operators and multipliers on $F(p, q, s)$ spaces
in the unit ball of $\mathbb{C}^n$. Acta Math Sci. 36B (2016), 635-654.

\bibitem{st1}
S. Stevi\'{c}, On an integral operator between Bloch-type spaces on the unit ball. Bull. Sci. Math. 134(4) (2010), 329-339.

\bibitem{st2}
S. Stevi\'{c},  Weighted iterated radial composition operators between some spaces of holomorphic functions on the unit ball, Abstr. Appl. Anal. 2010, Art. ID 801264, 14 pp.,doi:10.1155/2010/801264.

\bibitem{yang}
C. L. Yang, Integral-type operators from $F(p, q, s)$ spaces to Zygmund-type spaces on the unit ball, J. Inequal. Appl. 2010 (2010),
Article ID 789285, 14 pages.

\bibitem{zhao1}
R. Zhao, On a general family of function spaces, Ann. Acad. Sci. Fenn. Math. Diss. 105 (1996), 56.

\bibitem{zhang1}
X. Zhang, Multipliers on some holomorphic function spaces (in Chinese). Chin. Ann. Math. Ser. A 26(4) (2005), 477-486.

\bibitem{zhang}
X. Zhang, C. He and F. Cao, The equivalent norms of $F(p, q, s)$ space in $\mathbb{C}^n$. J. Math. Anal. Appl. 401(2) (2013), 601-610.

\bibitem{zhu2}
R. Zhao, On $F(p,q,s$) spaces, Acta Mathematica Scientia, 2021, 41B(6) (2021),  1985-2020.


\bibitem{zhu1}
K. Zhu, Spaces of holomorphic functions in the unit ball. Graduate Text in Mathematics 226. Springer, New York (2005)
\end{thebibliography}
\end{document}